\documentclass{amsart} 
\usepackage{graphicx} 
\usepackage{newlattice}

\newtheorem{theorem}{Theorem}
\newtheorem{lemma}[theorem]{Lemma}

\theoremstyle{definition}

\newcommand{\al}{a_\tup{l}}
\newcommand{\ar}{a_\tup{r}}

\newcommand{\br}{b_\tup{r}}

\newcommand{\qdn}{\fq_{\tup{dn}}}
\newcommand{\qup}{\fq_{\tup{up}}}
\newcommand{\jcon}{\textup{J}(\Con L)}

\newcommand{\swing}{\mathbin{\raisebox{2.0pt}
       {\rotatebox{160}{$\curvearrowleft$}}}}

\DeclareMathOperator{\length}{length}

\newcommand{\grrel}{\mathbin{\gr}}

\hyphenation{con-gru-ence-per-spec-ti-vity prime-pers-pec-ti-vity}

\begin{document}
\title[The Swing Lemma]{Congruences in slim, planar, \\semimodular lattices: The Swing Lemma} 
\author{G. Gr\"{a}tzer} 
\address{Department of Mathematics\\
   University of Manitoba\\
   Winnipeg, MB R3T 2N2\\
   Canada}
\email[G. Gr\"atzer]{gratzer@me.com}
\urladdr[G. Gr\"atzer]{http://server.math.umanitoba.ca/homepages/gratzer/}

\date{April 21, 2015}
\keywords{Prime-perspective, congruence, congruence-perspective, perspective, prime interval.}
\subjclass[2010]{Primary: 06C10, 06B10}

\begin{abstract}
In an earlier paper, 
to describe how a congruence spreads 
from a prime interval to another in a finite lattice,
I introduced the concept 
of prime-perspectivity and its transitive extension,
prime-projectivity and proved the Prime-projectivity Lemma.

In this paper, I specialize the Prime-projectivity Lemma
to slim, planar, semimodular lattices to obtain the Swing Lemma,
a very powerful description 
of the congruence generated by a prime interval
in this special class of lattices.
\end{abstract}

\maketitle

\section{Introduction}\label{S:Introduction}
To describe how a congruence spreads 
from a prime interval to another in a finite lattice $L$,
I introduced the concept of prime-perspectivity in \cite{gG14c}.

Let $L$ be a finite lattice 
and let $I$ and $J$ be intervals of $L$. 
Figure~\ref{F:intro} depicts 
the binary relation \emph{$I$ down-perspective to $J$}, 
in formula, $I \perspdn J$. 
We define dually the binary relation \emph{$I$ up-perspective to $J$}, 
in formula, $I \perspup J$. 
Finally, let $I$ be \emph{perspective to $J$}, in formula,
$I \persp J$, if $I \perspdn J$ or $I \perspup J$.

\begin{figure}[h]
\centerline{\includegraphics{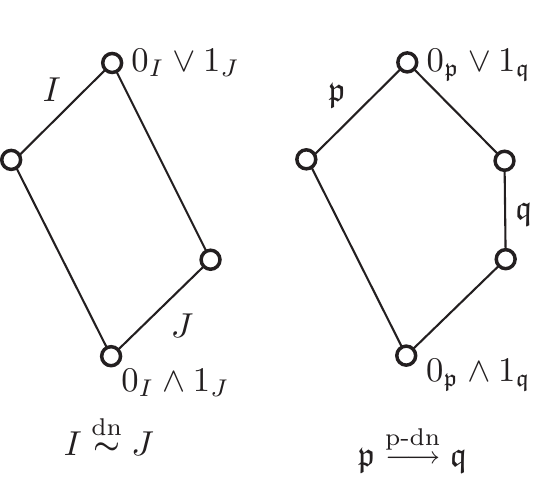}}
\caption{Introducing prime-perspectivity}\label{F:intro}
\end{figure}

Now let $\fp$ and $\fq$ be prime intervals of $L$.
In the second diagram in Figure~\ref{F:intro},
$\fq$~is collapsed by $\con{\fp}$,
but we cannot get from $\fp$ to $\fq$ 
by a sequence of down- and up-perspectivities between prime intervals. 
So~we introduce a more general step between two prime intervals: 
$\fp$~is \emph{prime-perspective down} to $\fq$ 
(in formula, $\fp \pperspdn \fq$) if
$\fp$ is down-perspective to~$[0_\fp \mm 1_\fq, 1_\fq]$
and $\fq$ is contained in $[0_\fp \mm 1_\fq, 1_\fq]$. 
If $\fp \pperspdn \fq$, then $\fp$ and $\fq$ generate
an $\SN 5$, as in the second diagram of Figure~\ref{F:intro},
or a $\SB 2 = \SC 2^2$, as in the first diagram of Figure~\ref{F:intro},
or $\SC 2$, if $\fp = \fq$.

We define \emph{prime-perspective up}, in formula,
$\fp \pperspup \fq$, dually.
Let \emph{prime-perspective}, in formula, $\fp \ppersp \fq$, 
mean that $\fp \pperspup \fq$ or $\fp \pperspdn \fq$ and let
\emph{prime-projective}, in~formula, $\fp \pproj \fq$, 
be the transitive extension of $\ppersp$. 

Now we state the main result of G. Gr\"atzer~\cite{gG14c}: 
we only have to go through prime intervals by prime-perspectivities
to spread a congruence from a prime interval to another
in a finite lattice.
 
\begin{named}{Prime-projectivity Lemma}
Let $L$ be a finite lattice and 
let $\fp$ and $\fq$ be distinct prime intervals in $L$.  
Then $\fq$ is collapsed by $\con{\fp}$ 
if{}f $\fp \pproj \fq$, that is, 
if{}f there exists a sequence of pairwise distinct prime intervals
$\fp = \fu_0, \fu_1, \dots, \fu_n = \fq$ satisfying
\begin{equation}\label{E:ppthm}
\fp = \fu_0 \ppersp \fu_1 \ppersp \dotsm \ppersp \fu_n = \fq.
\end{equation}
\end{named}

Let us call a lattice $L$ an \emph{SPS lattice}, 
if it is slim (contains no $\SM 3$ sublattice), 
planar, and semimodular. Note that an SPS lattice is finite by definition.

For the prime intervals $\fp, \fq$ of an SPS lattice $L$, 
we define a new binary relation:
$\fp$~\emph{swings} to $\fq$, in formula, $\fp \swing \fq$,
if $1_\fp = 1_\fq$, 
the element $1_\fp = 1_\fq$ covers at least three elements,
and $0_\fq$ is neither the left-most nor the right-most element
covered by $1_\fp = 1_\fq$. 
We say that $\fp \swing \fq$ is \emph{established by} 
an $\SN 7$ sublattice of $L$, 
if the $\SN 7$ is generated by $0_\fp$, $0_\fq$, 
and a third element covered by $1_\fp$.

See Figure~\ref{F:n7+} for two examples. 

\begin{figure}[b]
\centerline{\includegraphics{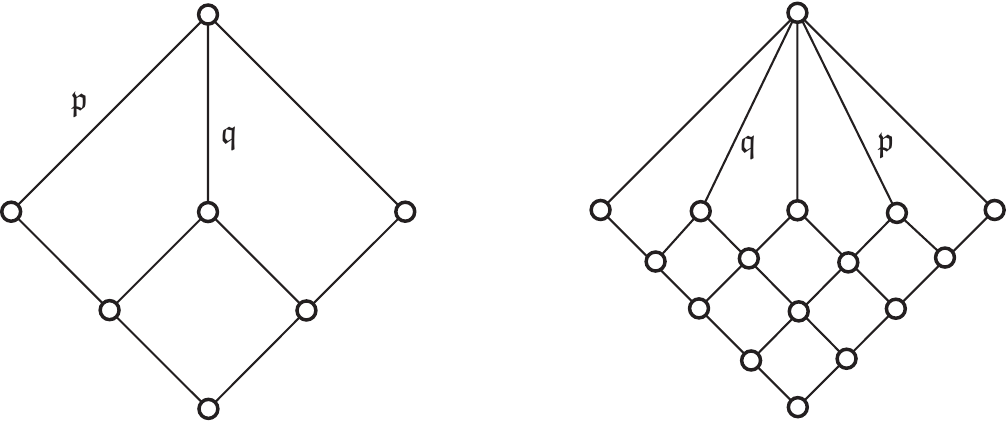}} 
\caption{Swings, $\fp \protect\swing \fq$}\label{F:n7+}
\end{figure}

\begin{named}{Swing Lemma}
Let $L$ be an SPS lattice 
and let $\fp$ and $\fq$ be distinct prime intervals in $L$. 
Then  $\fq$ is collapsed by $\con{\fp}$ if{}f 
there exists a prime interval $\fr$ 
such that $\fp$ is up-perspective to $\fr$
and there exists a sequence of prime intervals
and a~sequence of binary relations 
\begin{equation}\label{Eq:sequence}
\fr = \fr_0 \grrel_1 \fr_1 \grrel_2 \fr_2 
      \dots \grrel_n \fr_n = \fq,
\end{equation}
where each relation $\grrel_i$ is $\perspdn$ or $\swing$.

In~addition, the sequence \eqref{Eq:sequence} also satisfies 
\begin{equation}\label{E:geq}
   1_{\fr_0} \geq 1_{\fr_1} \geq \dots \geq 1_{\fr_n}.
\end{equation}
\end{named}

If we choose a shortest sequence in the Swing Lemma, 
then the prime intervals 
$\fr = \fr_0, \fr_1, \fr_2, \dots , \fr_n = \fq$
are pairwise distinct and the down-perspectivities alternate with the swings.

The Swing Lemma is easy to visualize. 
Up-perspectivity is ``climbing up'', 
down-perspectivity is ``sliding down''. 
So we get from $\fp$ to $\fq$ by climbing up once,
and then alternating sliding down and swinging.

In this paper we give an elementary proof of this result.
``Elementary'' means that 
we do not use the deep techniques and results
developed for rectangular lattices in G.~Cz\'edli~\cite{gC13}. 
An alternative proof of the Swing Lemma 
can be found in G.~Cz\'edli~\cite{gC14b}.

\section{Preliminaries}\label{S:Prelim}

\subsection{Fork construction}\label{S:forks}
The following lemma is implicitly used 
in G.~Cz\'edli and E.\,T.~Schmidt~\cite{CS12a}:

\begin{lemma}\label{L:fork}
Let $K$ be an SPS lattice. 
Let $S =\set{o, \al, \ar, t}$ be a covering square of~$K$,
and let $\al$ be to the left of $\ar$. 
Then there are maximal chains
\begin{align*}
   \al &= x_{l,1} \succ x_{l,2} \succ \dots \succ x_{l,n_l},\\
    o &= y_{l,1} \succ y_{l,2} \succ \dots \succ y_{l,n_l},
\end{align*}
such that $x_{l,n_l}$ and $y_{l,n_l}$ 
are on the left boundary of $K$
and the interval $[y_{l,n_l}, \al]$ is 
isomorphic to $\SC 2 \times \SC{n_l}$,
and symmetrically.

Let 
\begin{align*}
G[S] = S &\uu \set{x_{l,1}, x_{l,2},\dots, x_{l,n_l}}
 \uu \set{y_{l,1}, y_{l,2}, \dots, y_{l,n_l}}\\
 &\uu \set{x_{r,1}, x_{r,2}, \dots, x_{r,n_r}}
 \uu \set{y_{r,1}, y_{r,2}, \dots, y_{r,n_r}}.
\end{align*}
Then $G[S]$ is a join-subsemilattice of $K$. 
Furthermore, $K$ is a cover-preserving extension of~$G[S]$.
\end{lemma}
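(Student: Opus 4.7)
The plan is to construct the four chains by iteratively sliding the covering edge $[o, \al]$ one covering square to the left (and symmetrically from $[o, \ar]$ to the right) until they reach the boundaries of $K$, and then to verify the asserted algebraic properties by a short case analysis.

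For the left chains, I would set $x_{l,1} := \al$ and $y_{l,1} := o$ and proceed by induction. Given a covering pair $x_{l,k} \succ y_{l,k}$, if $x_{l,k}$ already lies on the left boundary of $K$, then I would terminate the recursion with $n_l := k$; a short planarity argument then forces $y_{l,k}$ to also lie on the left boundary, since otherwise $x_{l,k}$ would admit a lower cover strictly to the left of $y_{l,k}$, contradicting the boundary position of $x_{l,k}$. Otherwise planarity supplies such a lower cover $x_{l,k+1}$, and I set $y_{l,k+1} := x_{l,k+1} \wedge y_{l,k}$; semimodularity then forces both $y_{l,k+1} \prec x_{l,k+1}$ and $y_{l,k+1} \prec y_{l,k}$, so the quadruple $\set{y_{l,k+1}, y_{l,k}, x_{l,k+1}, x_{l,k}}$ is a fresh covering square of $K$ to the left of the previous one. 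The sequence $(x_{l,k})$ strictly descends in the finite lattice $K$, so the process terminates. The right chains are constructed symmetrically.

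For the isomorphism $[y_{l,n_l}, \al] \cong \SC{2} \times \SC{n_l}$, the $n_l - 1$ covering squares produced above stack to embed $\SC{2} \times \SC{n_l}$ into the interval. By the Jordan--H\"older condition in the semimodular lattice $K$, every maximal chain in the interval has length $n_l$; any element lying outside the stacked cells would combine with the two ladder chains to produce a configuration containing $\SM{3}$, violating slimness. Hence the interval equals the embedded copy of $\SC{2} \times \SC{n_l}$.

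The join-subsemilattice property of $G[S]$ then follows from a case analysis on $u, v \in G[S]$. Joins within a single ladder remain in that ladder by the $\SC{2} \times \SC{n_l}$ structure. For cross-ladder joins, every left-ladder element is $\leq \al$ and every right-ladder element is $\leq \ar$, so all joins lie under $t$; when both $u, v \leq o$ with $u \neq v$, semimodularity forces $u \vee v = o$, and the remaining cases (involving an $x$-element on one side) resolve similarly to $\al$, $\ar$, or $t$. The cover-preserving claim is then immediate, since every covering pair in $G[S]$ appears in one of the covering squares built in the construction, hence is a covering pair in $K$. I expect the main obstacle to be the cross-ladder join case analysis, which must be unwound carefully for all combinations of $y$- and $x$-elements across the two ladders; the inductive construction itself is routine modulo standard planarity lemmas, and the isomorphism with $\SC{2} \times \SC{n_l}$ drops out from slimness.
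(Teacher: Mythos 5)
The paper gives no proof of Lemma~\ref{L:fork} at all---it is stated as a known fact, implicitly used in G.~Cz\'edli and E.\,T.~Schmidt~\cite{CS12a}---so your proposal has to stand on its own, and its inductive construction fails at the termination step. You stop as soon as the \emph{element} $x_{l,k}$ reaches the left boundary and assert that $y_{l,k}$ is then forced onto the boundary, ``since otherwise $x_{l,k}$ would admit a lower cover strictly to the left of $y_{l,k}$, contradicting the boundary position of $x_{l,k}$.'' There is no such contradiction: a left-boundary element may perfectly well have a second lower cover to the right of its boundary lower cover. Concretely, consider the eight-element SPS staircase $L = \set{0, w, b, l, r, t', r', t''}$ with covers $0 \prec w, b$; $w \prec l$; $b \prec l, r$; $l \prec t'$; $r \prec t', r'$; $t' \prec t''$; $r' \prec t''$ (left boundary $0 \prec w \prec l \prec t' \prec t''$), and the covering square $S = \set{r, t', r', t''}$ with $\al = t'$, $o = r$. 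Your recursion halts immediately with $n_l = 1$ because $x_{l,1} = t'$ lies on the left boundary, yet $y_{l,1} = r$ is an interior element, so the conclusion of the lemma fails for your chains; the correct chains must continue past the boundary element, $t' \succ l \succ w$ and $r \succ b \succ 0$, giving $n_l = 3$ and $[0, t'] \iso \SC 2 \times \SC 3$. The right stopping rule is that the \emph{edge} $[y_{l,k}, x_{l,k}]$ lies on the left boundary, equivalently that no cell of the diagram sits to its left, and one must keep sliding even after the upper chain has entered the boundary.

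Two further steps are unsupported even once the termination is repaired. First, ``planarity supplies such a lower cover $x_{l,k+1}$'' is not true as stated: planarity only supplies a $4$-cell to the left of an internal edge $[y_{l,k}, x_{l,k}]$, and a priori that cell could have this edge as its \emph{lower}-right side (the configuration in which the slide ascends to the left), in which case $x_{l,k}$ has no lower cover left of $y_{l,k}$. What excludes the ascending cell is slimness, not planarity: it would give $y_{l,k}$ a third upper cover besides $x_{l,k}$ and $\ar$ (for $k = 1$), respectively $y_{l,k-1}$ (for $k \geq 2$), contradicting Lemma~\ref{L:known}(i). Second, ``semimodularity then forces both $y_{l,k+1} \prec x_{l,k+1}$ and $y_{l,k+1} \prec y_{l,k}$'' is false as a general principle: upper semimodularity does not make the meet of two elements covered by their join into a lower cover---in $\SfS 7$ the two outer coatoms join to the top but meet at the bottom, two levels down. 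You would also need $x_{l,k+1}$ to be the lower cover of $x_{l,k}$ \emph{adjacent} to $y_{l,k}$, not an arbitrary one to its left, and then the covering square you want is exactly the $4$-cell of the diagram; it is this cell structure of SPS lattices (in the spirit of Lemma~\ref{L:known}(iii)--(iv)) that does the work your one-liner attributes to semimodularity. The later parts of your sketch---the $\SC 2 \times \SC{n_l}$ isomorphism via slimness and the cross-ladder join computations such as $y_{l,j} \jj y_{r,k} = o$---are plausible in outline but rest on these same unproved cell and adjacency facts.
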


\begin{figure}[h]
\centerline{\includegraphics{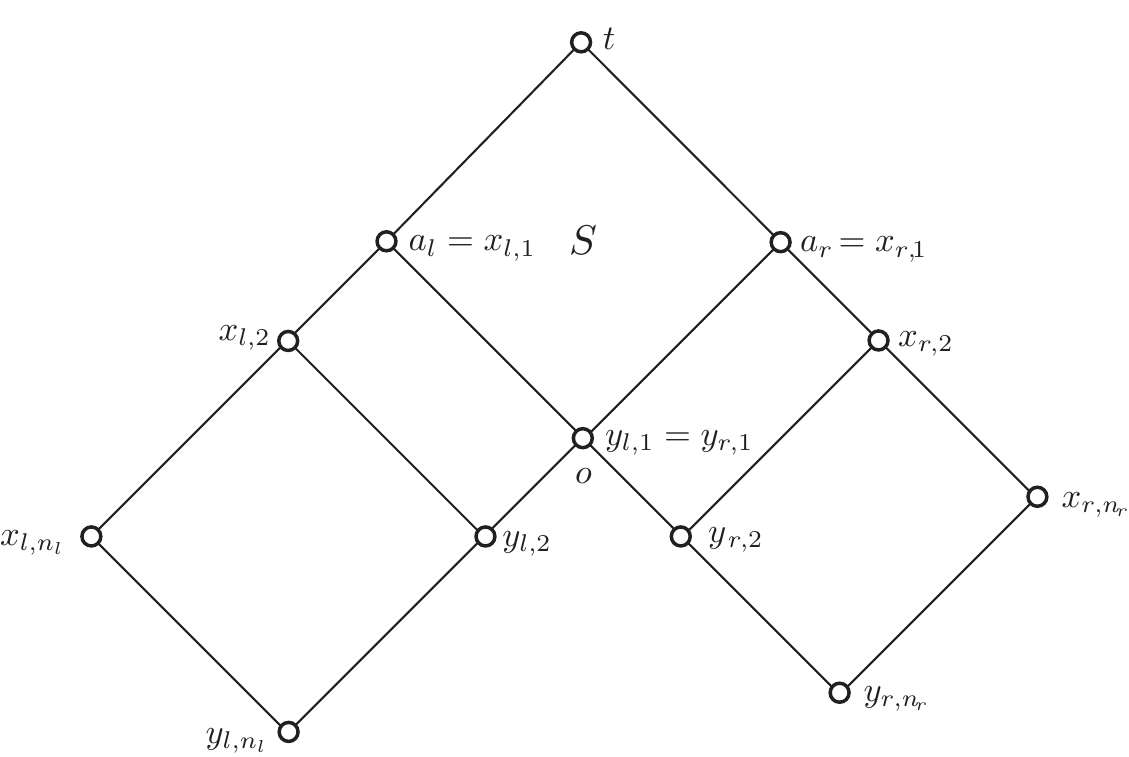}}
\caption{$G[S]$, a join-subsemilattice of $K$}\label{F:forklemma}
\end{figure}

As in G.~Cz\'edli and E.\,T.~Schmidt~\cite{CS12a}, 
\emph{inserting a fork} into $K$ 
at the covering square~$S$ adds the elements
\begin{equation}\label{E:FS}
F[S] = \set{m,z_{l,1} \succ \dots \succ z_{l,n_l}, 
          z_{r,1} \succ \dots \succ z_{r,n_r}},
\end{equation}
see Figure~\ref{F:forkdetails}, so that the interval
$[o,t]$ turns into an $\SN 7$ (see Figure~\ref{F:s7}),
and the interval $[y_{l,n_l}, \al]$ becomes 
isomorphic to $\SC 3 \times \SC{n_l}$,
and symmetrically. Let $K[S]$ denote this construct.
Then $K[S]$ is an SPS lattice, 
as~observed in G.~Cz\'edli and E.\,T.~Schmidt~\cite{CS12a}.
See Figure~\ref{F:forkexample} for an illustration; 
the black filled elements form~$F[S]$.

\begin{figure}[t]
\centerline{\includegraphics{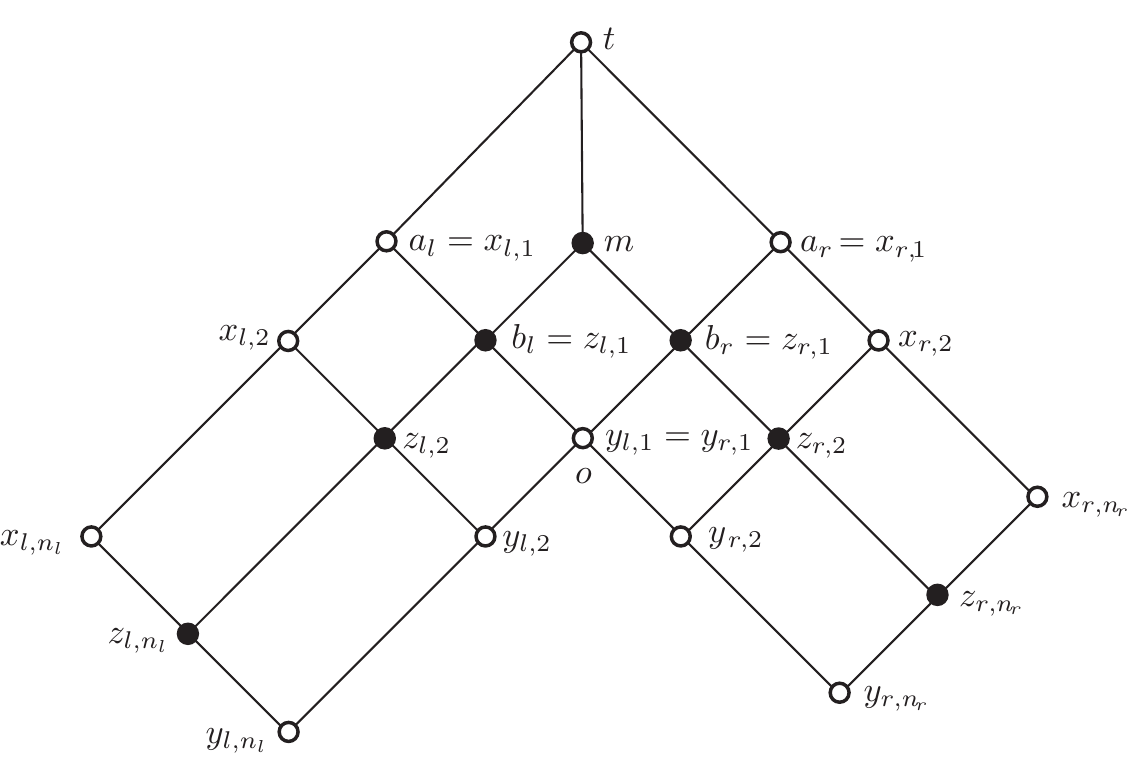}}
\caption{$F[S]$ inserted}\label{F:forkdetails}

\bigskip

\bigskip

\centerline{\includegraphics{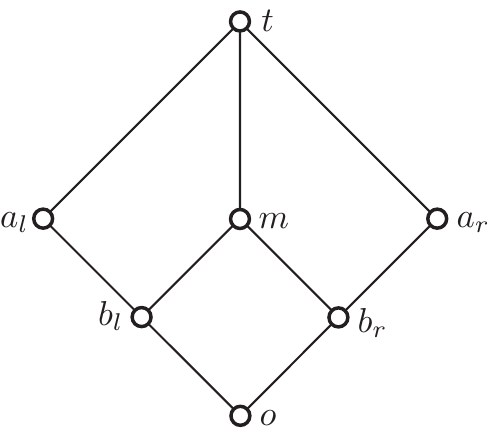}}
\caption{The lattice $\SfS 7$}\label{F:s7}
\end{figure}

\begin{figure}[t]\centerline{\includegraphics{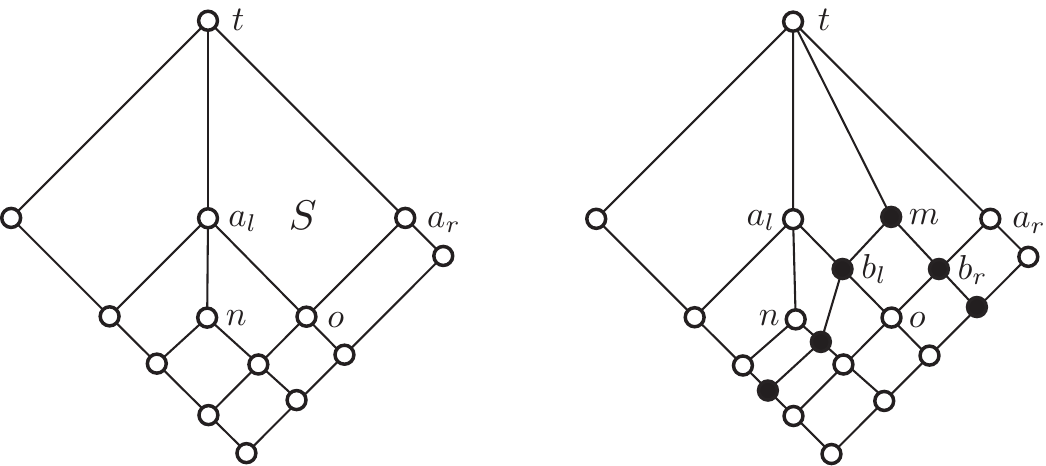}}
\caption{Inserting a fork: $K$, $S$, and $K[S]$}\label{F:forkexample}
\end{figure}

\subsection{SPS lattices}\label{S:SPS}

For an overview of this topic, 
see G. Cz\'edli and G.~Gr\"atzer \cite{CGa},
Chapter 3 of G. Gr\"atzer and F. Wehrung eds.~\cite{LTE}.

Let us call the elements $u,v,w \in L$ 
\emph{pairwise disjoint over the element $a$}
provided that $a = u \mm v = v \mm w = w \mm u$.

The first, third, and fourth statement of the next lemma 
can be found in the literature 
(see G.~Gr\"atzer and E.~Knapp~\cite{GKn07}--\cite{GKn10}, 
G.~Cz\'edli and E.\,T. Schmidt~\cite{CS12}--\cite{CS12a}).

\begin{lemma}\label{L:known}
Let $L$ be an SPS lattice. 
\begin{enumeratei}
\item An element of $L$ has at most two covers.
\item If the elements $u,v,w \in L$ 
are pairwise disjoint over $a$,
then two of them are comparable.
\item Let $x \in L$ 
cover three distinct elements $u$, $v$, and $w$.
Then the set $\set{u,v,w}$ generates an $\SfS 7$ sublattice.
\item If the elements $u$, $v$, and $w$ are adjacent, 
then the $\SfS 7$ sublattice of \tup{(iii)} 
is a cover-preserving sublattice.
\item  Let $\fp, \fq$ be distinct prime intervals of $L$. 
If $\fp \swing \fq$, then $0_\fq$ is a meet-irreducible element.
\end{enumeratei}
\end{lemma}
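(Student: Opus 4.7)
The approach is to argue by contradiction, combining Lemma~\ref{L:known}(i)--(iv) with the 4-cell structure of the planar embedding of an SPS lattice. Setting $t := 1_\fp = 1_\fq$, I would list the lower covers of $t$ as $a_1, \dots, a_k$ ($k \geq 3$) in left-to-right order, so that $0_\fq = a_i$ for some $1 < i < k$, and suppose toward a contradiction that $a_i$ has a second upper cover $t' \neq t$.

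The first step is to harvest structural consequences. By Lemma~\ref{L:known}(i), the upper covers of $a_i$ are exactly $t$ and $t'$; since $a_i \prec t, t'$, semimodularity produces a common upper cover $s := t \vee t'$ covering both, and a short argument (ruling out $t \wedge t' = t$ or $t \wedge t' > a_i$ via $a_i \prec t$) forces $t \wedge t' = a_i$. Next, applying Lemma~\ref{L:known}(iii) and (iv) to the adjacent lower covers $a_{i-1}, a_i, a_{i+1}$ of $t$ yields a cover-preserving $\SN 7$ sublattice, in which $a_i$ picks up two lower covers $z_l := a_{i-1} \wedge a_i$ and $z_r := a_i \wedge a_{i+1}$; by Lemma~\ref{L:known}(i) applied to $z_r$, the only upper covers of $z_r$ in $L$ are $a_i$ and $a_{i+1}$.

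The contradiction then comes from planarity. Because $a_{i-1}$ is strictly to the left of $a_i$ and $a_{i+1}$ strictly to its right at the same rank, $a_i$ is an interior vertex of the planar embedding, so the standard ``every interior face is a covering square'' property of SPS lattices (implicit in the fork-construction material of Section~\ref{S:forks}) applies at $a_i$. Assume WLOG that $t'$ sits to the right of $t$ in the cyclic edge-order at $a_i$; then in the clockwise order the edges at $a_i$ read $t, t', z_r, z_l$, and the interior face bounded by the consecutive edges $a_i \prec t'$ and $z_r \prec a_i$ is a covering square $\{z_r, u, a_i, t'\}$ with $z_r \prec u \prec t'$. But $z_r$'s only upper covers are $a_i$ and $a_{i+1}$, so $u = a_{i+1}$; hence $a_{i+1} \leq t'$, whence $t = a_i \vee a_{i+1} \leq t'$, collapsing $t = t'$, a contradiction.

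The main obstacle, I expect, is the 4-cell step: the fact that every interior face of the Hasse diagram of an SPS lattice is a covering square is standard in the literature but is not recorded in Lemma~\ref{L:known}. The plan would either cite this property precisely or extract it directly from the fork-construction description. A backup route avoiding the 4-cell theorem would instead induct on the number of fork insertions used to build $L$, showing that the middle element $m$ of each newly inserted $\SN 7$ retains its unique upper cover under every subsequent fork insertion; Lemma~\ref{L:known}(i) blocks $m$ from ever becoming the bottom of a later covering square, which is what keeps the upper-cover count at $1$.
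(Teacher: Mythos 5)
Your proposal has one genuine gap of coverage and two gaps of detail. The coverage gap: the paper quotes (i), (iii), and (iv) from the literature, so the two items that actually require proof in this lemma are (ii) \emph{and} (v), and your write-up never touches (ii). Its proof is short but not vacuous, and your standing hypothesis ``no two comparable'' is exactly where it starts: if $u,v,w$ are pairwise disjoint over $a$ and no two are comparable, choose $u', v', w'$ with $a \prec u' \leq u$, $a \prec v' \leq v$, $a \prec w' \leq w$; if, say, $u' = v'$, then $u' \leq u \mm v = a$, a contradiction, so $u',v',w'$ are three distinct covers of $a$, contradicting (i). You need to include this (or an equivalent argument) for the lemma to be fully proved.

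For (v), your argument reaches a valid contradiction but by a different and heavier route than the paper, which disposes of (v) in one line as a consequence of (iv): once the adjacent lower covers $u$, $0_\fq$, $w$ of $1_\fq$ generate a \emph{cover-preserving} $\SfS 7$, a putative second upper cover $t'$ of $0_\fq$ is trapped by planarity inside one of the two covering squares $\set{u \mm 0_\fq,\, u,\, 0_\fq,\, 1_\fq}$ or $\set{0_\fq \mm w,\, 0_\fq,\, w,\, 1_\fq}$ of that $\SfS 7$ (the edge from $0_\fq$ to $t'$ points into the region bounded by the corresponding pair of chains, so $t' \leq 1_\fq$, forcing $t' = 1_\fq$); no appeal to the 4-cell theorem or to edge rotation orders is needed. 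In your version, besides the 4-cell property itself (which you rightly flag and would have to cite from the Gr\"atzer--Knapp papers), the claim that the clockwise edge order at $a_i$ reads $t, t', z_r, z_l$ is asserted without proof: lower covers are \emph{not} bounded by two in an SPS lattice (the top of an $\SfS 7$ has three), so you must separately argue that $z_r$ is the rightmost lower cover of $a_i$, which again requires a planarity (Kelly--Rival-type) trapping argument -- at which point the shorter direct argument is already available. Your instinct to lean on the covers of $z_r$ is sound, since any correct proof must use the full swing hypothesis: in the grid $\SC 3 \times \SC 3$ the centre element has two lower and two upper covers, so ``join-reducible implies meet-irreducible'' is false without it. Finally, your backup route does not work as stated: the Structure Theorem quoted in this paper builds only \emph{slim patch lattices} from $\SB 2$ by fork insertions, and a general SPS lattice (e.g., the grid $\SC 3 \times \SC 3$) is not so obtained, so an induction on ``the number of fork insertions used to build $L$'' is unavailable without first reducing to a suitable patch-lattice interval.
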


\begin{proof}
To verify (ii), let the elements $u,v,w \in L$ 
be pairwise disjoint over the element~$a \in L$.
By way of contradiction, 
assume that no two of them are comparable.
Then, in particular, the elements $u,v,w$ are pairwise distinct.
So~$a < u$, $a < v$, $a < w$. 
We can choose $a \prec u' \leq u$, 
$a \prec v' \leq v$, $a \prec w' \leq w$.
The elements $u',v',w'$ are pairwise distinct.
Indeed, if say, $u' = v'$, then $u' = v' \leq u \mm v$,
contradicting that $a = u \mm v$.
So the elements $u',v',w'$ are pairwise distinct
and cover~$a$, contradicting~(i). 
Finally, (v) follows from (iv).
\end{proof}

Lemma~\ref{L:known}(i) and (ii) state in different ways 
that there are only two directions ``to go up'' from an element.
The next lemma states this in one more way.
This important statement follows from \cite[Lemma 2.8]{CS11}.

\begin{lemma}\label{L:persp}
Let $L$ be an SPS lattice. 
Let $\fq, \fq_1,\fq_2$ be pairwise distinct prime intervals of $L$
satisfying $\fq_1 \perspdn \fq$ and $\fq_2 \perspdn \fq$.
Then $\fq_1 \persp \fq_2$.
\end{lemma}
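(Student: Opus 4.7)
My plan is to argue by induction on the quantity $\length[0_\fq, 0_{\fq_1}] + \length[0_\fq, 0_{\fq_2}]$, at each step replacing $\fq$ by a prime interval one cover closer to $\fq_1$ and $\fq_2$ and invoking the induction hypothesis. Unpacking the hypotheses gives $1_{\fq_i} = 0_{\fq_i} \vee 1_\fq$ and $0_{\fq_i} \mm 1_\fq = 0_\fq$ for $i = 1, 2$, so each $0_{\fq_i}$ sits strictly above $0_\fq$ and is incomparable with $1_\fq$.

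The crucial step will be to produce a common upper cover $u$ of $0_\fq$ lying below both $0_{\fq_1}$ and $0_{\fq_2}$. For each $i$, I would pick an upper cover $u_i$ of $0_\fq$ with $u_i \leq 0_{\fq_i}$; then $u_i \neq 1_\fq$ (else $1_\fq \leq 0_{\fq_i}$ contradicts the incomparability), and $u_i \mm 1_\fq = 0_\fq$ follows from $u_i \leq 0_{\fq_i}$. If $u_1 \neq u_2$, the elements $u_1, u_2, 1_\fq$ are three pairwise distinct atoms over $0_\fq$ with all pairwise meets equal to $0_\fq$, so Lemma~\ref{L:known}(ii) would force two of them to be comparable and hence equal, which is impossible. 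So $u_1 = u_2 =: u$.

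Next I would set $t := u \vee 1_\fq$; semimodularity applied to $u \mm 1_\fq = 0_\fq \prec 1_\fq$ yields $u \prec t$, making $\fr := [u, t]$ a prime interval with $\fr \perspdn \fq$. For each $i$, the element $0_{\fq_i} \mm t$ lies in the prime interval $[u, t]$ and cannot equal $t$ (else $1_\fq \leq t \leq 0_{\fq_i}$, again contradicting the incomparability), so it equals $u$; combined with $0_{\fq_i} \vee t = 0_{\fq_i} \vee 1_\fq = 1_{\fq_i}$, this yields $\fq_i \perspdn \fr$ when $0_{\fq_i} > u$, and $\fq_i = \fr$ when $0_{\fq_i} = u$.

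To conclude: if some $\fq_i$ equals $\fr$, then the other is down-perspective to it and $\fq_1 \persp \fq_2$ holds directly. Otherwise $\fq_1 \perspdn \fr$ and $\fq_2 \perspdn \fr$ with $\fr, \fq_1, \fq_2$ pairwise distinct, and the induction parameter has strictly dropped (by $2$), so the induction hypothesis closes the argument. The main obstacle I anticipate is the clean application of Lemma~\ref{L:known}(ii); one must carefully confirm that the three chosen atoms over $0_\fq$ are genuinely pairwise disjoint over $0_\fq$ before invoking the lemma, after which the reduction and the termination of the induction are routine.
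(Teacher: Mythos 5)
Your proof is correct, but it takes a genuinely different route from the paper: in this version of the paper, Lemma~\ref{L:persp} is not proved at all---it is quoted as a consequence of \cite[Lemma 2.8]{CS11} (the trajectory machinery for slim semimodular lattices), with an elementary derivation from Lemma~\ref{L:known} only advertised in the arXiv version. Your argument is a self-contained substitute: unpacking $\fq_i \perspdn \fq$ to get $0_\fq \prec$-covers $u_i \leq 0_{\fq_i}$, forcing $u_1 = u_2 = u$ via pairwise disjointness over $0_\fq$, and then climbing one step to $\fr = [u, u \jj 1_\fq]$ by semimodularity, with induction on $\length[0_\fq, 0_{\fq_1}] + \length[0_\fq, 0_{\fq_2}]$. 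All the steps check out: $0_{\fq_i} \mm 1_\fq = 0_\fq$ with $0_{\fq_i} > 0_\fq$ does give incomparability of $0_{\fq_i}$ and $1_\fq$; $0_{\fq_i} \mm t = u$ and $0_{\fq_i} \jj t = 1_{\fq_i}$ do give $\fq_i \perspdn \fr$ or $\fq_i = \fr$; the new triple $\fr, \fq_1, \fq_2$ satisfies exactly the hypotheses of the lemma; and the base of the induction is correctly absorbed into the ``some $\fq_i = \fr$'' branch, since when $0_{\fq_i} \succ 0_\fq$ the cover $u_i$ is forced to be $0_{\fq_i}$ itself. Two cosmetic remarks: at the point where you rule out $u_1 \neq u_2$ you could invoke Lemma~\ref{L:known}(i) directly, since $u_1, u_2, 1_\fq$ would be three distinct covers of $0_\fq$, rather than routing through (ii); and the claim that the parameter drops ``by $2$'' uses the Jordan--H\"older gradedness of semimodular lattices, though for the induction only strict decrease is needed, which holds even for length defined as longest chain. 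What your approach buys is independence from \cite{CS11}: the lemma then rests only on Lemma~\ref{L:known}(i)--(ii) and semimodularity, which fits the paper's stated goal of an elementary proof of the Swing Lemma; what the paper's citation buys is brevity and a connection to the trajectory framework used elsewhere in the literature.
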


Lemma~\ref{L:persp} can also be derived from Lemma~\ref{L:known}, 
see the arXiv version of this paper. 

An SPS lattice $L$ is called a \emph{slim patch lattice} 
if it has exactly two dual atoms that meet in $0$.
For a slim patch lattice~$L$, we shall use the notation:
$\fp_\tup{l}$ and $\fp_\tup{r}$ are the two prime intervals 
on the top boundaries of~$L$ and
$\fp_\tup{l} = [c_\tup{l}, 1_L]$ on the left, 
$\fp_\tup{r} = [c_\tup{r}, 1_L]$ on the right. 

The following result can be found in
G. Cz\'edli and E.\,T. Schmidt~\cite{CS12}.

\begin{named}{Structure Theorem for Slim Patch Lattices}
Let $L$ be a slim patch lattice. 
Then we can obtain $L$ 
from the $4$-element Boolean lattice $\SB 2 = \SC 2^2$
by a series of fork insertions.
\end{named}

\section{Two lemmas}\label{S:newlemma}
\begin{figure}[b!]
\centerline{\includegraphics{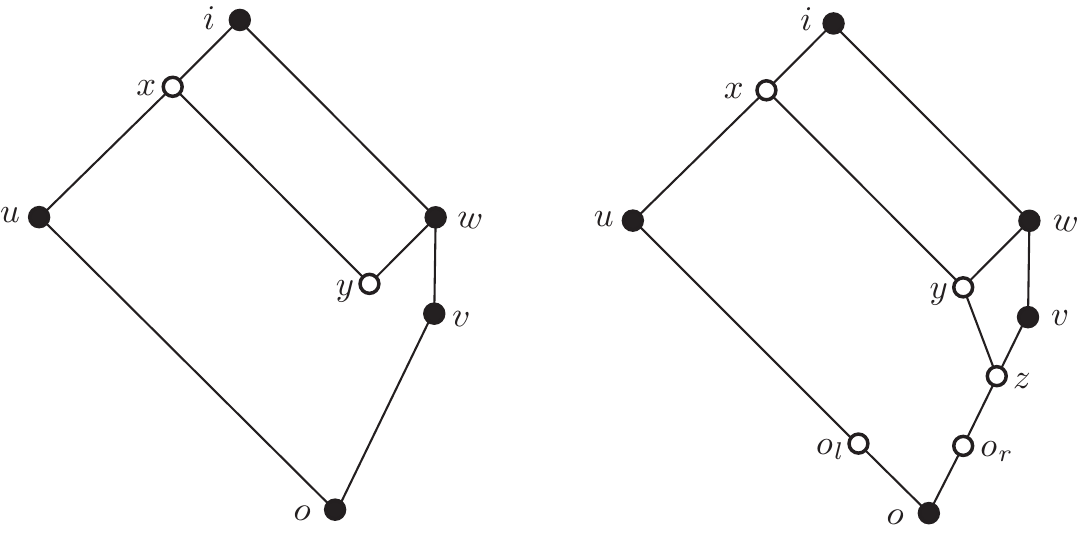}}
\caption{The elements for Lemma~\ref{L:newlemma}}
\label{F:newlemma}
\end{figure}

The following lemma is a crucial step 
in the proof of the Swing Lemma.

\begin{lemma}\label{L:newlemma}
Let $L$ be an SPS lattice. 
Let $N = \set{o, u, i, v, w}$ be an $\SN 5$ sublattice of~$L$,
with $o < u < i$ and $o < v < w < i$.
Let us assume that $[v, w]$ is a prime interval. 
Let $u \leq x \prec i$.
Then $y = x \mm w < v$.
\end{lemma}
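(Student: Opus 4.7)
The plan is to separate $y < v$ into $y \le v$ and $y \ne v$, each by contradiction, with nearly all the work going into the inequality.

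Both ``not equal'' statements follow from the $\SN 5$ identities $u \jj v = u \jj w = i$: if $y$ were $w$ or $v$, then $x \ge u \jj y = i$, contradicting $x \prec i$. For $y \le v$, suppose $y \not\le v$. Since $y \le w$ and $[v, w]$ is prime, the case $v \le y$ forces $y \in \set{v, w}$, already excluded; so $y$ and $v$ are incomparable. Then $v < y \jj v \le w$, and primeness gives $y \jj v = w$, whence $w \le x \jj v$ and $i = u \jj w \le x \jj v$, so $x \jj v = i$ and $v \not\le x$. Standard height counts in the semimodular lattice $L$ (from $x \prec i = x \jj v$ and $x \prec i = x \jj w$) then yield the covering relations $b := v \mm x \prec v$, $y \prec w$, and $b \prec y$, so that $\set{b, v, y, w}$ is a covering square.

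The plan is now to apply Lemma~\ref{L:known}(ii) to the triple $\set{u, v, y}$. The three elements are pairwise incomparable ($u, v$ from the $\SN 5$; $u, y$ because $u \mm y \le u \mm w = o$ collapses either direction of a comparison; $v, y$ by assumption), and the pairwise meets are $u \mm v = o$, $u \mm y = o$, and $v \mm y = v \mm x = b$ (using $v \le w$). In the clean subcase $b = o$, the triple is pairwise disjoint over $o$, and Lemma~\ref{L:known}(ii) produces the required contradiction directly.

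The main obstacle is the subcase $b > o$, where the triple is not pairwise disjoint over $o$. Here I would pass to the interval $[b, i]$ and consider the lifted triple $\set{u \jj b, v, y}$: the meet $(u \jj b) \mm v = b$, since $b \prec v$ and $v \le u \jj b \le x$ would contradict $v \not\le x$. If additionally $(u \jj b) \mm y = b$, Lemma~\ref{L:known}(ii) applied within $[b, i]$ closes the argument. Otherwise $y \le u \jj b$, and then $\set{o, u, u \jj b, b, y}$ is a new $\SN 5$-sublattice of $L$ with prime top interval $[b, y]$, lying in the proper subinterval $[o, u \jj b]$ of $[o, i]$; induction on the length of the top interval, applied to this smaller $\SN 5$, then closes the argument, since at each iteration the value of $b$ drops strictly and the process terminates in the clean subcase $b = o$.
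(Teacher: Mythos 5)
Your skeleton matches the paper's proof up to a point: you rule out $y \geq v$ via $u \jj v = i$, assume $y \parallel v$, and aim to contradict Lemma~\ref{L:known}(ii). But the step where you produce the covering square $\set{b, v, y, w}$ is a genuine gap, and it sits exactly where the real work lies. From $x \prec i = x \jj v$ and $x \prec i = x \jj w$ you infer $b = v \mm x \prec v$, $y \prec w$, and $b \prec y$ by ``standard height counts.'' That inference is the \emph{lower} semimodular implication, and SPS lattices are only upper semimodular: the submodular inequality gives $h(i) + h(b) \leq h(x) + h(v)$, i.e., only $h(b) \leq h(v) - 1$, which is not a covering. Concretely, in $\SfS 7$ (Figure~\ref{F:s7}) let $x$ be the left-most and $w'$ the right-most element covered by the unit $i$; then $x \prec i = x \jj w'$, yet $x \mm w'$ is the zero, \emph{two} steps below $w'$. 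This failure of the dual implication is precisely the $\SfS 7$/swing phenomenon the whole paper is about, so it cannot be waved through. The gap propagates: you need $b \prec y$ both for the dichotomy ``$(u \jj b) \mm y = b$ or $y \leq u \jj b$'' and for $[b,y]$ to be a prime interval of your new $\SN 5$. (One piece is salvageable without any covering: $(u \jj b) \mm v \leq x \mm v = b$ since $u \jj b \leq x$, so that meet computation stands on its own.)

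The closing induction is also underspecified. To use the induction hypothesis on the new $\SN 5 = \set{o, u, u \jj b, b, y}$ you must exhibit an element $x'$ with $u \leq x' \prec u \jj b$ whose conclusion $x' \mm y < b$ contradicts something already established; you name no such $x'$, and the claim that ``$b$ drops strictly at each iteration'' presupposes that the whole configuration, including a fresh incomparability $x' \mm y \parallel b$, recurs at the smaller scale, which is not shown---if instead $x' \mm y < b$ for every such $x'$, the lemma's conclusion holds there and no contradiction results. Note how the paper avoids both problems by obtaining its covers in the legitimate, upward direction: it chooses atoms $o \prec o_l \leq u$ and $o \prec o_r \leq v$ of the interval $[o, i]$, uses upper semimodularity only in the valid form ($o \prec o_l$ implies $z \prec z \jj o_l$ for $z = y \mm v$), and then applies Lemma~\ref{L:known}(ii) twice, with no induction at all. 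Repairing your argument would mean replacing the covering-square step by such an upward construction, at which point you essentially recover the paper's proof.
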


\begin{proof}
There are three mutually exclusive possibilities: $y \geq v$, 
$ y \parallel v$, and $y < v$.

Since $i = u \jj v$,
we cannot have $y \geq v$, 
because it would imply that $x = u \jj v$.

We want to prove that $y < v$. 
So by way of contradiction, let us assume that 
\begin{equation}\label{E:assume}
   y \parallel v,
\end{equation}
see the first diagram of~Figure~\ref{F:newlemma}.
Define the elements $o_l$ and $o_r$ satisfying
\begin{equation}\label{E:start}
o \prec o_l \leq u \text{ and } o \prec o_r \leq v,
\end{equation}
see the second diagram of Figure~\ref{F:newlemma}.
Since $u \mm v = 0$, it follows that $o_l \mm o_r = 0$.
Note that 
\begin{equation}\label{E:nleq}
   o_l \nleq y.
\end{equation}
Indeed, if $o_l \leq y$, then $o_l \leq y \leq w$, 
and so $o_l \leq u \mm w = o$,
contradicting~\eqref{E:start}. 

We can further assume that 
\begin{equation}\label{E:leq}
   o_r < y.
\end{equation}
Since $o_r = y$ contradicts \eqref{E:assume},  
if \eqref{E:leq} fails, then $o_r \nleq y$.
Thus $o_l \mm o_r = o_l \mm y = o_r \mm y$,
contradicting Lemma~\ref{L:known}(ii), and
thereby verifying~\eqref{E:leq}. 

So we have \eqref{E:assume}--\eqref{E:leq}.
It follows that $o_r < v$; indeed if $o_r = v$, then $v < y$, contradicting \eqref{E:assume}. Let $z = y \mm v$.
Since $o_r < v$, and by \eqref{E:start}, $o_r < y$;
therefore, $o_r \leq z$ and $z < v$ by \eqref{E:assume}.
Also, $z \prec z \jj o_l \neq y$ 
by semimodularity and~\eqref{E:nleq}.
Then $(z \jj o_l) \mm y = (z \jj o_l) \mm v = y \mm v$,
contradicting Lemma~\ref{L:known}(ii). 
\end{proof}

The next statement is a very special case of the Swing Lemma;
it is also a crucial step in its proof.
We are considering the following condition  
for a slim patch lattice~$K$:

\begin{enumeratei}
\item[(SL)]
Let $\fq$ be a prime interval of $K$ 
on the lower right boundary of $K$, 
that is, let $1_\fq \leq c_\tup{r}$.
Then there exists a sequence of prime intervals
$\fp_\tup{l} = \fr_0, \fr_1, \dots, \fr_n = \fq$
such that $\fr_i$ is down-perspective to 
or swings to $\fr_{i+1}$ for $i = 0, \dots, n-1$. 
\end{enumeratei}

\begin{lemma}\label{L:SPSprojspec}
Let $K$ be a slim patch lattice and let
$S =\set{o, \al, \ar, t}$ be a covering square of~$K$,
with $\al$ to the left of $\ar$.
If \tup{(SL)} hold in~$K$, then \tup{(SL)} 
also holds in $K[S]$.
\end{lemma}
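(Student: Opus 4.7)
My plan is to transfer the sequence guaranteed by \tup{(SL)} in $K$ into a valid sequence in $K[S]$, via local modifications around the inserted fork.

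First, since $K[S]$ must itself be a slim patch lattice, the square $S$ cannot be the top covering square, so the top and right boundary chains of $K$ are inherited unchanged by $K[S]$. In particular, $\fp_\tup{l}$, $c_\tup{r}$, and the notion of ``lower right boundary'' have the same meaning in $K$ and in $K[S]$, and every prime interval $\fq$ of $K[S]$ with $1_\fq \leq c_\tup{r}$ is already a prime interval of $K$ on its lower right boundary. Applying \tup{(SL)} in $K$ to~$\fq$, I obtain a sequence $\fp_\tup{l} = \fr_0 \grrel_1 \fr_1 \grrel_2 \dotsm \grrel_n \fr_n = \fq$ in $K$, with each $\grrel_i \in \set{\perspdn, \swing}$.

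Second, I would identify the prime intervals of $K$ that cease to be prime in $K[S]$. Using the description of fork insertion in Lemma~\ref{L:fork}, these are precisely the intervals $[y_{l,j}, x_{l,j}]$ for $j = 1, \dots, n_l$ and their right counterparts $[y_{r,j}, x_{r,j}]$; each is split in two by a single $z$-element. In particular, the cases $j = 1$ give $[o, \al]$ and $[o, \ar]$. All other prime intervals of $K$ remain prime in $K[S]$, and $K$ is a sublattice of $K[S]$ (joins and meets of $K$-elements are unchanged). Neither $\fp_\tup{l}$ nor $\fq$ is \emph{broken} in this sense, so the endpoints of the sequence survive into $K[S]$.

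Third, I would walk through the sequence and convert each step. A step $\fr_{i-1} \grrel_i \fr_i$ transfers verbatim to $K[S]$ when neither endpoint is broken and the relation does not depend on a cover destroyed by the fork (the affected covers are $\al \succ o$, $\ar \succ o$, and the grid covers $x_{l,j} \succ y_{l,j}$ and $x_{r,j} \succ y_{r,j}$). Otherwise I would splice in a short local detour: a broken $[y_{l,j}, x_{l,j}]$ is replaced by the pair $[z_{l,j}, x_{l,j}]$, $[y_{l,j}, z_{l,j}]$ joined by the down-perspectivity arising from the new covering square in the $\SC 3 \times \SC{n_l}$ grid; and a broken swing whose apex is $\al$ or $\ar$ is rerouted through the corresponding $z_{l,1}$ or $z_{r,1}$, possibly combined with a down-perspective step inside the new $\SN 7$ at $[o, t]$.

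The main obstacle is the case analysis for swings $\fr_{i-1} \swing \fr_i$ whose apex $1_{\fr_{i-1}} = 1_{\fr_i}$ lies in $\set{\al, \ar, t}$, since these are precisely the elements whose cover sets change with the fork. The case of apex $t$ is painless: in $K$ the element $t$ covers only $\al$ and $\ar$, so no $K$-swing has apex $t$ to transfer, while in $K[S]$ the new cover $m$ only produces fresh swings. The harder case is apex $\al$ or $\ar$, where the old cover $o$ is replaced in the planar order by $z_{l,1}$ or $z_{r,1}$; here I would simulate a $K$-swing passing through $[o, \al]$ in $K[S]$ via $[z_{l,1}, \al]$, invoking Lemma~\ref{L:known}\tup{(iv)} to certify cover-preservation of the new $\SN 7$ and Lemma~\ref{L:persp} to reconcile competing down-perspectivities into the fork region. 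Completing this bookkeeping uniformly across all the local replacements is the technical heart of the argument.
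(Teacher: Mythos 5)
Your opening reduction is where the proof breaks. It is not true that the lower right boundary of $K[S]$ is inherited from $K$: by Lemma~\ref{L:fork}, the legs of the fork run all the way to the two lower boundaries, so the insertion places $z_{l,n_l}$ and $z_{r,n_r}$ \emph{on} the lower boundaries of $K[S]$. Consequently the boundary prime interval $[y_{r,n_r}, x_{r,n_r}]$ of $K$ splits into the two prime intervals $\qdn = [y_{r,n_r}, z_{r,n_r}]$ and $\qup = [z_{r,n_r}, x_{r,n_r}]$ of $K[S]$, which do not lie in $K$; so your claim that every prime interval $\fq$ of $K[S]$ with $1_\fq \leq c_\tup{r}$ is already a prime interval of $K$ is false. (Your justification is also false: $S$ \emph{can} be the top square --- the base case of the whole induction is $\SB 2[S] = \SN 7$, and the top boundary is nevertheless unchanged.) This omits what is in effect half of the paper's proof, its Case 2 ($\fq \nci K$): there one applies \tup{(SL)} in $K$ to $\fq' = [y_{r,n_r}, x_{r,n_r}]$, observes that the last step into $\fq'$ cannot be a swing (the lower right boundary is a chain, so $1_{\fq'}$ covers only one element there), and then treats $n=1$ and $n>1$ separately, producing for $\qup$ the detour $\fp_\tup{l} \perspdn [\al, t] \swing [m, t] \perspdn \qup$. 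Note that $\qup$ can only be reached through a swing into the fork's middle chain, so no transfer of a $K$-sequence, however locally modified, can reach it.

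Your local repair rule in the remaining case (the paper's Case 1) is also wrong as stated: the two halves of a broken $[y_{r,j}, x_{r,j}]$ are \emph{not} joined by a down-perspectivity --- the upper halves $[z_{r,j}, x_{r,j}]$ form a trajectory through $[m,t]$, while the lower halves belong to a different trajectory, so again one must pass through the $\SN 7$ over $[o,t]$ by a swing. The paper's actual bookkeeping is not step-by-step splicing but a global argument: take the \emph{largest} $j$ with $\fr_j$ not prime in $K[S]$; prove $0 < j < n$; prove the step out of $\fr_j$ must be a swing (if $\fr_j \perspdn \fr_{j+1}$, then $\fr_{j+1} \perspup \fr_j$ with $\fr_{j+1}$ prime and $\fr_j$ not prime contradicts semimodularity of $K[S]$); prove the breaking element is some $z_{r,p}$ and never $m$; and then use Lemma~\ref{L:persp} together with Lemma~\ref{L:known}(v) to exclude $[o,\ar] \perspdn \fr_{j-1}$ and force $\fr_{j-1} \perspdn [o,\ar] \perspdn \fr_j$, after which the explicit detour through $[\al,t] \swing [m,t]$ is spliced in. You name the right tools (Lemma~\ref{L:persp}, Lemma~\ref{L:known}), but you defer exactly this analysis to ``bookkeeping,'' and with your proposed local rule it would not close. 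A smaller inaccuracy: in $K$ the element $t$ need not cover only $\al$ and $\ar$ ($S$ is merely a covering square), so $K$-swings with apex $t$ do exist; they happen to transfer, since $[\al,t]$ and $[\ar,t]$ remain prime and $m$ is inserted between them, but your stated reason is not valid.
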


\begin{proof}
Note that $\fp_\tup{l}$ and $\fp_\tup{r}$ are also the
two prime intervals of $K[S]$ on the top boundaries of $K[S]$;
the elements $c_\tup{l}$ and $c_\tup{r}$ are also remain the same..

To verify (SL) for $K[S]$, 
let $\fq$ be a prime interval of $K[S]$ 
on the lower right boundary, that is, $1_\fq \leq c_\tup{r}$.

If~$K = \SB 2$, then (SL) is trivial because $K[S] = \SN 7$. 
So we can assume that $K \not\iso \SB 2$.

There are two cases to consider.
 
\emph{Case 1: $\fq \ci K$.} 
Since $\fq$ is prime in $K[S]$ and $\fq \ci K$, 
it follows that $\fq$ is prime in~$K$.
So we can apply (SL) to $\fq$ in $K$, to obtain 
a shortest sequence of prime intervals in $K$
and a~sequence of binary relations 
\begin{equation}\label{E:seque}
\fp_\tup{l} = \fr_0 \grrel_1 \fr_1 \grrel_2 \fr_2 
      \dots \grrel_n \fr_n = \fq,
\end{equation}
where each relation $\grrel_i$ 
is $\perspdn$ or $\swing$.
If all the $\fr_i$ are prime intervals 
in $K[S]$, then the sequence \eqref{E:seque}
verifies (SL) in $K[S]$ for $\fq$. 
So let some $\fr_i$ not be prime in $K[S]$;
we choose the $\fr_j$ with the largest $j$ 
so that $\fr_j$ is not a~prime. 
Since no element of $F[S]$ (defined in \eqref{E:FS},
see also Figure~\ref{F:forkdetails}) can be 
on the upper left boundary of~$L$, we conclude that $j \neq 0$. 
Since $\fq$ is prime in~$K$, it follows that $j \neq n$. 
Therefore, 
\begin{equation}\label{E:0jn}
0 < j < n
\end{equation}
and the intervals $\fr_{j+1}, \dots, \fr_n = \fq$ are prime in $K[S]$,
while the interval $\fr_{j}$ is not. 

There are two possibilities:   
$\fr_{j} \perspdn \fr_{j+1}$ 
or $\fr_{j} \swing \fr_{j+1}$ in~$K$---note that 
$j+1 \leq n$ by \eqref{E:0jn}.
If $\fr_{j} \perspdn \fr_{j+1}$ in~$K$, then
$\fr_{j+1} \perspup \fr_{j}$ in $K$ and in $K[S]$. 
Since $\fr_{j+1}$ is prime in $K[S]$ but 
$\fr_{j}$ is not, this conflicts with
the semimodularity of $K[S]$.
We conclude that $\fr_{j} \swing \fr_{j+1}$.
Let $\fr_{j} \swing \fr_{j+1}$ be established by 
an $\SN 7$ generated by $0_{\fr_{j}}, 0_{\fr_{j+1}}, w$,
where $w$ is the right-most element covered by $1_{\fr_{j}}$
if $\fr_{j}$ is to the left of $\fr_{j+1}$
and the left-most element covered by $1_{\fr_{j}}$, otherwise.
Note that $\set{0_{\fr_{j}}, 0_{\fr_{j+1}}, w}$ is a three-element set
since $\fr_{j} \swing \fr_{j+1}$.
 
Since $\fr_{j}$ is not prime in $K[S]$, it follows that
$0_{\fr_{j}} < z < 1_{\fr_{j}}$ for some $z \in F[S]$. 
We~cannot have $z = m$, because in $K[S]$,
$z$ is contained in an interval $[0_{\fr_{j}}, 1_{\fr_{j}}]$
that is prime in $K$, while $m$ is not
contained in an interval that is prime in $K$.
We~conclude that 
\begin{equation}\label{E:zdef}
   z = z_{r, p}, \text{ for some $1 \leq p \leq n_r$},
\end{equation}
or symmetrically.
It follows that $[o, \ar] \perspdn \fr_j$ in $K$
and so 
\begin{equation}\label{E:perspdn}
   [\br, \ar] \perspdn [z, 1_{\fr_{j}}].
\end{equation}
Since $\fr_{j} \swing \fr_{j+1}$ in $K$, 
it follows that $1_{\fr_j}$ covers 
at least three elements in $K$,
and so~$1_{\fr_j}$ covers 
at least three elements in $K[S]$.
Therefore, in $K[S]$,
\begin{equation}\label{E:swing}
   [z, 1_{\fr_j}] \swing \fr_{j+1}.
\end{equation}
Since $[o, \ar] \perspdn \fr_{j}$ and
$\fr_{j-1} \perspdn \fr_{j}$,
we can apply Lemma~\ref{L:persp} to conclude that either  
\begin{equation}\label{E:2step1}
   [o, \ar] \perspdn \fr_{j-1} \perspdn \fr_{j}   
\end{equation}
or
\begin{equation}\label{E:2step2}
   \fr_{j-1} \perspdn [o, \ar] \perspdn \fr_{j}.   
\end{equation}

If \eqref{E:2step1} holds, then $\fp_\tup{l} \neq \fr_{j-2}$,
since $[o, \ar] \perspdn \fr_{j-1}$ and
$[o, \ar]$ is not on the left boundary of $K[S]$.  
So we have the prime interval $\fr_{j-2}$ satisfying
that $\fr_{j-2} \swing \fr_{j-1}$. 
By Lemma~\ref{L:known}.(v), $\fr_{j-2} \swing \fr_{j-1}$ cannot hold.
So \eqref{E:2step2} holds.

By \eqref{E:0jn}--\eqref{E:swing}, and \eqref{E:2step2},
the sequence of prime intervals with the binary relations
\[
   \fp_\tup{l} = \fr_0 \grrel_1 \fr_1, \dots, \grrel_{j-1} \fr_{j-1}
   \perspdn [\al, t] \swing [m,t] 
   \perspdn [\br \mm 1_{\fr_{j-1}}, 1_{\fr_{j-1}}]
   \swing \fr_{j+1} \grrel_{j+2} \dots \grrel_{n} \fr_n = \fq
\]
establishes (SL) for $K[S]$, see Figure~\ref{F:case1}. 

\begin{figure}[b]
\centerline{\includegraphics{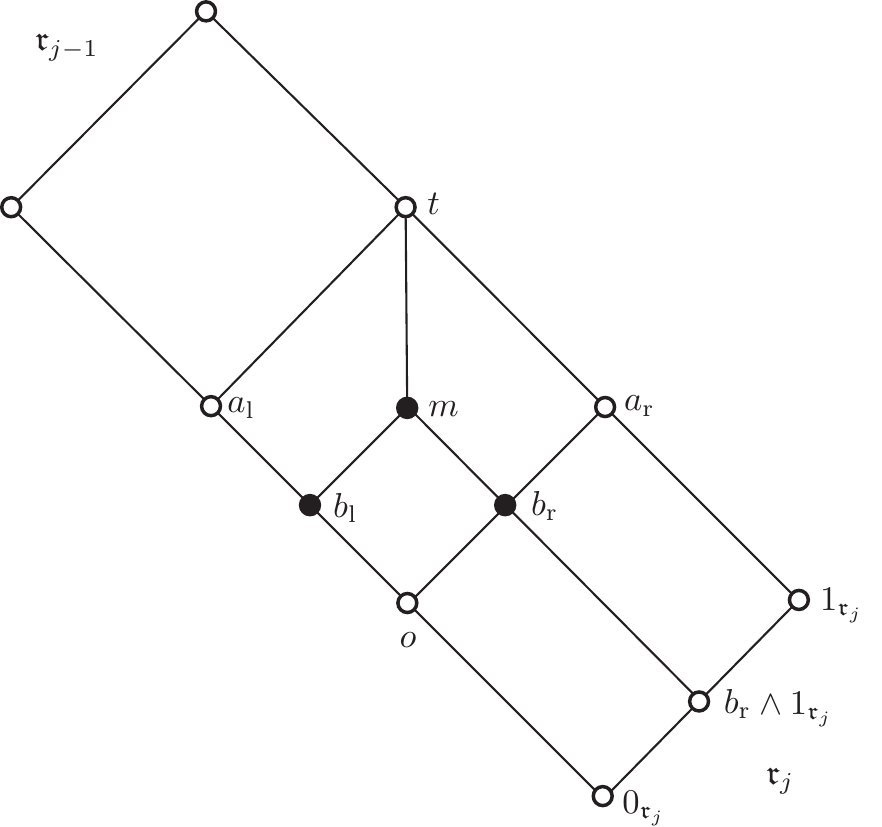}}
\caption{Case 1 of Lemma~\ref{L:SPSprojspec}}\label{F:case1}
\end{figure}

\emph{Case 2: $\fq \nci K$.}
Since $\fq \nci K$ is a prime interval 
on the lower right boundary of~$K[S]$, it follows that
\[
   \fq = [y_{r, n_r}, x_{r, n_r}]_{K[S]} = 
         \set{y_{r, n_r}, z ,x_{r, n_r}},
\] 
where $z = z_{r, n_r}$ 
using the notation of Figure~\ref{F:forkdetails}, or symmetrically.
Let $\qup = [z_{r, n_r}, x_{r, n_r}]$ and 
$\qdn = [y_{r, n_r}, z_{r, n_r}]$;
they are prime intervals in $K[S]$
and $\fq = \qup$ or $\fq = \qdn$.

To verify Case 2, we have to prove (SL) in $K[S]$ for 
$\fq = \qup$ and $\fq = \qdn$.

Let $\fq' = [0_{\qdn}, 1_{\qup}] = [y_{r, n_r}, x_{r, n_r}]$;
it is a prime interval of $K$ on the lower right boundary of $K$.
By applying (SL) to $K$ and $\fq'$, we obtain 
a shortest sequence of prime intervals in $K$
and a~sequence of binary relations 
\begin{equation}\label{E:seque2}
\fp_\tup{l} = \fr_0 \grrel_1 \fr_1 \grrel_2 \fr_2 
      \dots \grrel_n \fr_n = \fq',
\end{equation}
where each relation $\grrel_i$ 
is $\perspdn$ or $\swing$.
Utilizing that the lower right boundary of $K$ is an interval, 
see G. Gr\"atzer and E. Knapp~\cite[Lemma 4]{GKn09}, 
the last step from $\fr_{n-1}$ to $\fr_n = \fq'$
cannot be a swing (if it were, 
$1_\fq$ would cover at least three elements; 
it covers exactly one), 
so $\fr_{n-1} \perspdn \fr_n = \fq'$ holds in $K$.

We have two subcases to consider.

\emph{Case 2a: $n = 1$,} that is, $\fp_\tup{l} = \fr_{n-1}$,
see Figure~\ref{F:case2a}. 
We cannot have $\fp_\tup{l} \swing \fq'$ 
because $\fq'$ is on the lower right boundary of $K$;
therefore, $\fp_\tup{l} \perspdn \fq'$.
We also have $[\al, t] \perspdn \fq'$,
so by Lemma~\ref{L:persp},
we obtain that $\fp_\tup{l} \persp [\al, t]$.
Since $\fp_\tup{l}$ is the top left prime interval of~$K$,
it follows that $\fp_\tup{l} \perspdn [\al, t]$.
Then in $K[S]$, see Figure~\ref{F:forkdetails}, 
\begin{equation}\label{E:xx}
\fp_\tup{l} \perspdn [\al, t] \swing [m, t] \perspdn \qup
\end{equation}
and of course, $\fp_\tup{l} \perspdn \qdn$. 
This completes the verification of (SL) for $\fp_\tup{l}$ and $\fq$.

\emph{Case 2b: $n > 1$,} and so, $\fp_\tup{l} \neq \fr_{n-1}$.
We conclude that $\fr_{n-1} \perspdn \fr_{n} = \fq'$.
Since $[o,\ar] \perspdn \fr_{n} = \fq'$ also holds,
we use Lemma~\ref{L:persp} to obtain that
\begin{equation}\label{E:dnpersp}
\fr_{n-1} \perspdn [o,\ar]
\end{equation}
or 
\begin{equation}\label{E:dnpersp2}
[o,\ar] \perspdn \fr_{n-1}.
\end{equation}
But \eqref{E:dnpersp2} would imply that $0_{\fr_{n-1}}$
is meet-reducible, contradicting that 
$0_{\fr_{n-1}}$ is not the left-most or right-most element
covered by $1_{\fr_{n-2}} = 1_{\fr_{n-1}}$.
We conclude that \eqref{E:dnpersp} holds.

Then $\fr_{n-2} \swing \fr_{n-1}$ 
and $1_{\fr_{n-2}} = 1_{\fr_{n-1}}$ by the definition 
of the swing relation. 
The~element $1_{\fr_{n-2}} = 1_{\fr_{n-1}}$ 
covers at least three elements
and $0_{\fr_{n-1}}$ is not the left-most or right-most element
covered by $1_{\fr_{n-2}} = 1_{\fr_{n-1}}$. 
We can also assume that 
$0_{\fr_{n-21}}$ is to the right of $0_{\fr_{n-2}}$ 
and the down-perceptivity $\fr_{n-1} \perspdn \fr_{n}$ 
is also to the right, as in Figure~\ref{F:case2b}.
Then $\fr_{n-1} \perspdn \qdn$
in $K[S]$.
So the sequence
\[
   \fp_\tup{l} = \fr_0, \fr_1, \dots, \fr_{n-1}, \qdn
\]
verifies (SL) for $\fp_\tup{l}$ and $\qdn$.
\begin{figure}[p]
\centerline{\includegraphics[scale=1]{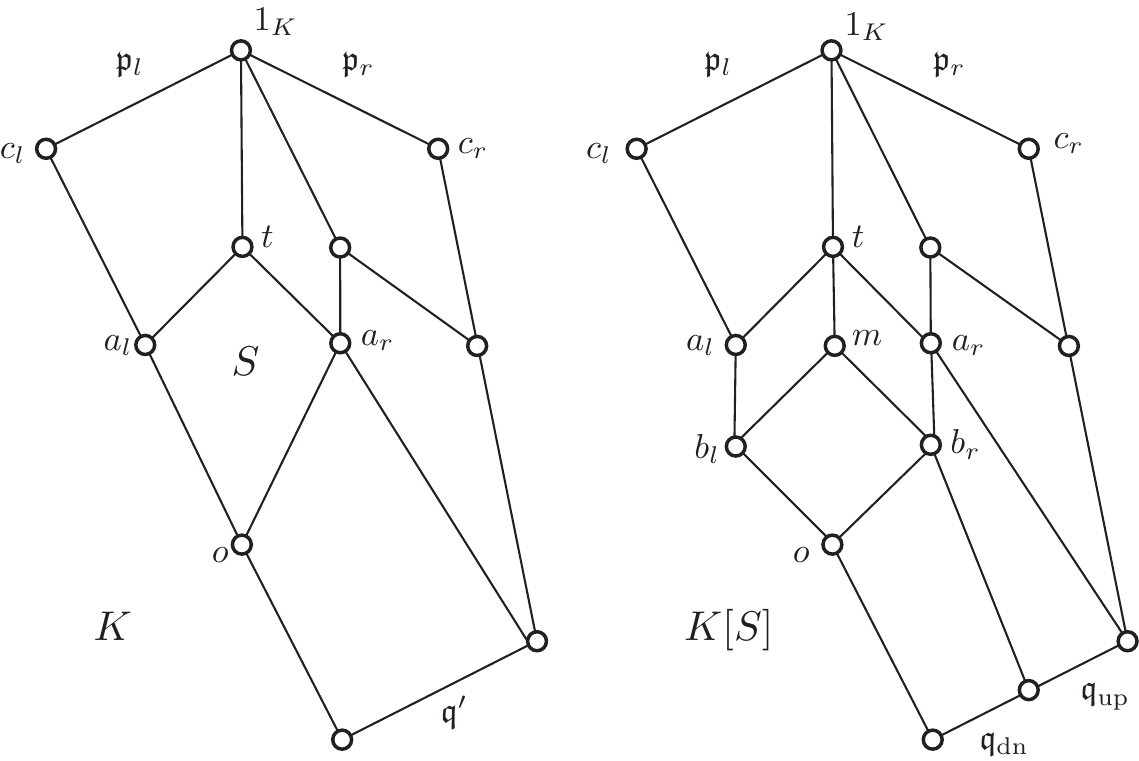}}
\caption{Case 2a of Lemma~\ref{L:SPSprojspec}}
\label{F:case2a}

\bigskip

\bigskip

\centerline{\includegraphics[scale=1]{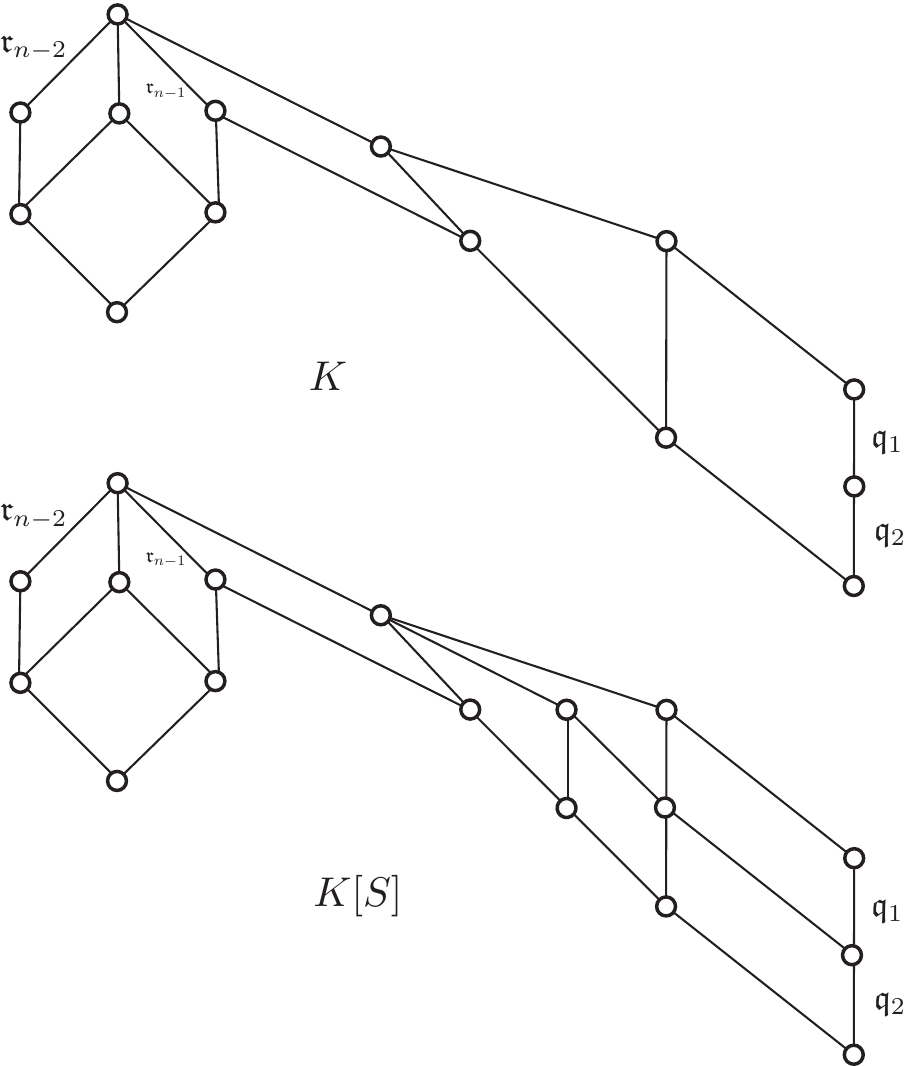}}
\caption{Case 2b of Lemma~\ref{L:SPSprojspec}}
\label{F:case2b}
\end{figure}
\end{proof}

\section{Proving the Swing Lemma}\label{S:Proving}
The following lemma almost yields the Swing Lemma.

\begin{lemma}\label{L:Swingstrong}
Let $L$ be an SPS lattice 
and let $\fp$ and $\fq$ be distinct prime intervals in~$L$.
If $\fp \pperspdn \fq$, then 
there exists a sequence of pairwise distinct prime intervals
\begin{equation}\label{Eq:sequence2}
\fp = \fr_0, \fr_1, \dots, \fr_n = \fq
\end{equation}
such that $\fr_i$ is down-perspective to or swings to $\fr_{i+1}$
for $i = 0, \dots, n-1$. 
\end{lemma}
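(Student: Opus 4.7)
The plan is to induct on $\ell := \length[0_\fp \mm 1_\fq, 1_\fq]$, unfolding the single prime-perspective step $\fp \pperspdn \fq$ into a sequence of down-perspectivities and swings between prime intervals.

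The base case $\ell = 1$ is the $\SB 2$ situation: $\fp$ and $\fq$ span a covering square, so $\fp \perspdn \fq$ directly and the sequence $\fp,\fq$ with $n = 1$ suffices. For the inductive step $\ell \geq 2$, the hypothesis provides an $\SN 5$ sublattice on $\set{o, u, i, v, w}$ exactly as in Lemma~\ref{L:newlemma}, where $u = 0_\fp$, $i = 1_\fp$, $v = 0_\fq$, $w = 1_\fq$, $o = u \mm w$, and $[v,w] = \fq$ is prime; note $u \jj w = i$ by definition of $\pperspdn$.

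I aim to exhibit a prime interval $\fr$ of $L$ with two properties: first, $\fp \perspdn \fr$ or $\fp \swing \fr$; second, either $\fr = \fq$ or $\fr \pperspdn \fq$ via an interval $[0_\fr \mm w, w]$ of length strictly less than $\ell$. Given such $\fr$, the inductive hypothesis supplies a sequence from $\fr$ to $\fq$; prepending $\fp$ yields the full sequence, and the strict decrease of $\ell$ across the recursion guarantees that the prime intervals are pairwise distinct. To produce $\fr$ I analyze the covers of $i$ in $L$ together with the structure of $[o,w]$. When $i$ has a third cover $y$ besides $u$, Lemma~\ref{L:known}(iii) provides an $\SfS 7$ sublattice on $\set{u, y, w}$; after a possible appeal to Lemma~\ref{L:persp} to select the correct middle cover, $\fp \swing [y, i]$ followed by a down-perspectivity from $[y, i]$ yields a $\pperspdn$ configuration for $\fq$ of strictly shorter length. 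Otherwise, a direct down-perspectivity from $\fp$ into $[o, v]$, landing on a prime whose top is a cover of $o$ strictly below $v$, produces the reduction.

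The main obstacle is securing the shortening property, since the naive target of the down-perspectivity from $\fp$ is the non-prime interval $[o, w]$ itself. Lemma~\ref{L:newlemma} is decisive here: applied to the covers $x$ of $i$ satisfying $x \geq u$, it forces $x \mm w < v$ strictly, which is exactly what precludes any single-step descent from $\fp$ from producing a $\pperspdn$ configuration whose inducing interval still has length $\ell$. This strict inequality, together with planarity and the $\SfS 7$-structure supplied by Lemma~\ref{L:known}(iii)--(iv), is what allows us to place $\fr$ in the correct position; without it, the inductive call could not be guaranteed to operate on a strictly smaller instance.
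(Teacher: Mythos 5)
There is a genuine gap, and it sits in the engine of your induction: the measure $\ell = \length[0_\fp \mm 1_\fq, 1_\fq]$ can never strictly decrease across a down-perspectivity. If $\fp \perspdn \fr$, then $0_\fr = 0_\fp \mm 1_\fr \leq 0_\fp$, so $0_\fr \mm 1_\fq \leq 0_\fp \mm 1_\fq$, and hence $\length[0_\fr \mm 1_\fq, 1_\fq] \geq \ell$; only a swing can raise the bottom of the inducing interval, and a swing at $\fp$ requires $1_\fp$ to cover at least three elements. There are instances of the lemma where this fails. Take $\SfS 7$ with top $t$, coatoms $a_1, m, a_2$ from left to right, $z_1 = a_1 \mm m$, $z_2 = m \mm a_2$, bottom $o$, and adjoin two elements $x$ and $1'$ with $a_1 \prec x$ and with $1'$ covering exactly $x$ and $t$; this is an SPS lattice. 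Put $\fp = [x, 1']$ and $\fq = [z_2, a_2]$. Then $x \mm a_2 = o$ and $x \jj a_2 = 1'$, so $\fp \pperspdn \fq$ with $\ell = \length[o, a_2] = 2$; the relation $\fp \perspdn \fq$ fails; and no swing from $\fp$ exists. Every legal first step is a down-perspectivity---the correct one being $\fp \perspdn [a_1, t]$, after which the measure is $\length[a_1 \mm a_2, a_2] = 2$, unchanged (the other candidates, such as $\fp \perspdn [o, z_2]$ or $\fp \perspdn [z_1, m]$, have tops below or incomparable to the needed position and are dead ends, since $\fr \pperspdn \fq$ forces $1_\fq \leq 1_\fr$ and neither $\perspdn$ nor $\swing$ ever raises tops). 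So your inductive step has no strictly smaller instance to recurse on. This is no accident: along any sequence as in the Swing Lemma, with $1_\fq \leq 1_{\fr_i}$ throughout, the quantity $\length[0_{\fr_i} \mm 1_\fq, 1_\fq]$ is constant at every down-perspectivity step. Two further details are also off: your appeal to Lemma~\ref{L:known}(iii) for $\set{u, y, w}$ requires $w \prec i$, but $w = 1_\fq$ need not be a lower cover of $i = 1_\fp$, since $\length[1_\fq, 1_\fp]$ can be arbitrarily large; and your ``otherwise'' branch lands on a prime interval inside $[o, v]$, whose top is at most $v = 0_\fq$, from which $\fq$ is unreachable for the reason just given.

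The comparison with the paper shows why no local repair will do. The paper inducts on $\length[1_\fq, 1_\fp]$, and the entire difficulty is concentrated in the base case $\length[1_\fq, 1_\fp] = 1$: there $L' = [0_\fp \mm 1_\fq, 1_\fp]$ is a slim patch lattice, and the sequence of down-perspectivities and swings is produced not by a step-by-step descent inside $L'$ but by a global induction on the number of fork insertions, using the Structure Theorem for Slim Patch Lattices together with Lemma~\ref{L:SPSprojspec} (condition (SL) is preserved when a fork is inserted). The paper's inductive step---where Lemma~\ref{L:newlemma} really is used, exactly as you intend, as a strictness device certifying $\fp_1 \pperspdn \fq$ for $\fp_1$ with $1_{\fp_1} = c \prec 1_\fp$---shrinks $[1_\fq, 1_\fp]$, never your $\ell$. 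Your plan has no counterpart to the fork-insertion induction, yet the sequences in question can be arbitrarily long with no quantity of the kind you propose decreasing at each step; overcoming precisely this obstacle is what the Structure Theorem is invoked for. Your base case $\ell = 1$ is correct, but the core of the proof is missing.
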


\begin{proof}
Let $\fp \pperspdn \fq$.
If $\fp \perspdn \fq$ holds, then the statement is trivial.
If $\fp \perspdn \fq$ fails to hold, 
then we induct on the length of the interval $[1_\fq, 1_\fp]$,
in formula, $\length[1_\fq, 1_\fp]$. 

\emph{For the induction base,} let $\length[1_\fq, 1_\fp] = 1$.
Let $L' $ be the interval $[0_\fp \mm 1_\fq, 1_\fp]$ of $L$.
Note that $\fq \ci [0_\fp \mm 1_\fq, 1_\fp]$ and
$\fp \pperspdn \fq$ in $L'$.  
Since $L'$ is a slim patch lattice,
by the Structure Theorem for Slim Patch Lattices, 
we can obtain $L'$ from the planar distributive lattice $D = \SB 2$
by a series of fork insertions. 
Since $D$ has property (SL) and  
fork insertions preserve (SL) by Lemma~\ref{L:SPSprojspec},
it follows that (SL) hold in $L'$. 
So we obtain in $L'$ the sequence \eqref{Eq:sequence2},
which of course, will serve in $L$ as well.

\emph{For the induction step,} 
let \text{$\length[1_\fq, 1_\fp] >1$.}
So we can choose $1_\fq < c \prec 1_\fp$. 
Let $a = 0_\fp \mm c$ and $d = 0_\fp \mm 1_\fq$,
see Figures~\ref{F:proof1} and \ref{F:induct}, 
where the five black filled elements form a sublattice $\SN 5$
establishing that $\fp \ppersp \fq$.
Note that by assumption $d < 0_\fq$.

There are two cases to consider.                                            

\begin{figure}[t!]
\centerline{\includegraphics{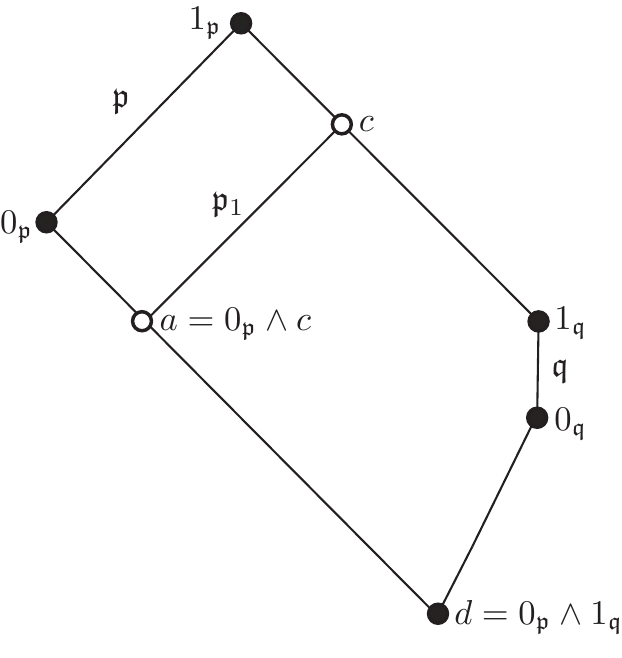}}
\caption{The elements for the inductive step, Case 1
in Lemma~\ref{L:Swingstrong}}
\label{F:proof1}
\end{figure}

\emph{Case 1: $[a, c]$ is a prime interval.}
Let $\fp_1 = [a, c]$, see Figure~\ref{F:proof1}. 
We claim that $\fp_1 \ppersp \fq$.
Indeed, $1_{\fp_1} = c > 1_\fq$ and 
\begin{equation}\label{E:long}
0_{\fp_1} \mm 1_\fq = a \mm 1_\fq 
= (0_\fp \mm a) \mm 1_\fq = a \mm (0_\fp \mm 1_\fq) 
= a \mm d = d < 0_\fq.
\end{equation}
If $a = a \jj 0_\fq$, then $0_\fp \jj 0_\fq = 0_\fp$, 
in conflict with the assumption that $\fp \pperspdn \fq$.
So $a < a \jj 0_\fq \leq c$; 
since $[a, c]$ is assumed to be a prime interval, 
it follows that $a \jj 0_\fq = c$.
Along with \eqref{E:long},
this verifies that $\fp_1 \ppersp \fq$.
Since 
\[
   \length[1_\fq, 1_{\fp_1}] < \length[1_\fq, 1_\fp],
\] 
by the inductive hypothesis, we conclude that 
$\fp_1 \ppersp \fq$.  
Combining this relation with $\fp \perspdn \fp_1$, 
we obtain \eqref{Eq:sequence2}, completing the proof for Case 1.

\emph{Case 2: $[a, c]$ is not a prime interval.}
Let $e = a \jj 1_\fq \leq c$.
Choose an element $b$ so that $a < b \prec c$, 
see Figure~\ref{F:induct}, and let $\fp_1 = [b, e]$.
Then 
\begin{equation}\label{E:stepone}
   \fp \pperspdn \fp_1 
\end{equation}
established by the $\SN 5 = \set{a, 0_\fp, 1_\fp, b, e}$.
\begin{figure}[hbt]
\centerline{\includegraphics{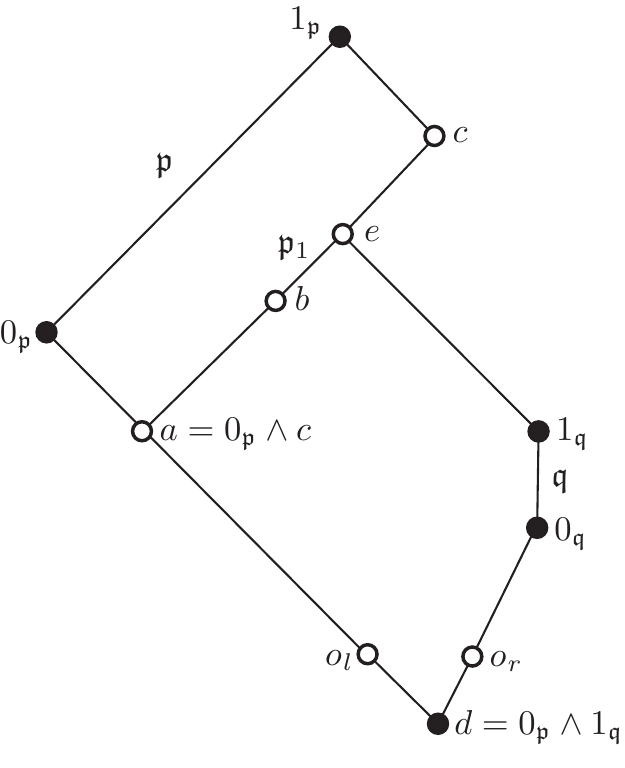}}
\caption{The elements for the inductive step, Case 2}
\label{F:induct}
\end{figure}
We apply Lemma~\ref{L:newlemma} 
with $\fp_1 = [x, i]$, $a = u$, $\fq = [v, w]$, and $o = d$.
Then we conclude that $0_{\fp_1} \mm 1_\fq = b \mm 1_\fq < 0_\fq$,
therefore, 
\begin{equation}\label{E:ppersp}
\fp_1 \pperspdn \fq.
\end{equation}
Now \eqref{E:stepone} and \eqref{E:ppersp} 
imply that $\fp \pperspdn \fq$, which we are required to prove.
\end{proof}

Now we are ready to prove the Swing Lemma.
Let $L$ be an SPS lattice 
and let $\fp$ and $\fq$ be distinct prime intervals in $L$ so that 
$\fq$ is collapsed by $\con{\fp}$. 
By ~the Prime-projectivity Lemma,
there exists a sequence of pairwise distinct prime intervals
$\fp = \fu_0, \fu_1, \dots, \fu_n = \fq$ satisfying
\begin{equation}\label{E:ppthm2}
\fp = \fu_0 \ppersp \fu_1 \ppersp \dotsm \ppersp \fu_n = \fq.
\end{equation}
If $\fu_{i-1} \pperspup \fu_i$ for $i = 1, \dots, n$, 
then $\fu_{i-1} \perspup \fu_i$ by semimodularity. 
If $\fu_{i-1} \pperspdn \fu_i$ for $i = 1, \dots, n$, 
then by Lemma~\ref{L:Swingstrong}, 
we get a sequence of down perspectivities and swings.
So \eqref{E:ppthm2} turns into a sequence of up perspectivities,
down perspectivities, and swings. By Lemma~\ref{L:known}(i)
(or Lemma~\ref{L:persp}), a down perceptivity
cannot be followed by an up perceptivity. 
By Lemma~\ref{L:known}.(v), a swing cannot be followed by an up perceptivity.
So if there is an up perceptivity, it must be the first binary relation.
Since two down perspectivities can be replaced by one
and two swings can be replaced by one, 
we conclude that the sequence of binary relations start 
with at most one up perceptivity, 
followed by an alternating sequence of down perspectivities and swings,
as claimed by the Swing Lemma.

\section{Concluding comments}\label{S:Concluding comments}

My paper \cite{gG14c} presents an alternative proof of the Swing Lemma.
G. Cz\'edli applies in \cite{gC14b} 
the Trajectory Coloring Theorem for Slim Rectangular Lattices 
of G.~Cz\'edli~\cite[Theorem 7.3]{gC13} 
to prove the Swing Lemma for rectangular lattices,
which is then extended to SPS lattices in \cite[Lemma 7]{gG14c}.

In \cite[Section 4]{gG14a}, I present 
a number of interesting applications of the Swing Lemma.
For instance, it is proved that coverings 
in the order, $\jcon$, of join-irreducible congruences 
of an SPS lattice $L$ are represented by proper swings.
(A~swing $\fp \swing \fq$ is \emph{proper}, if  
$0_\fp$ is the left-most or the right-most element
covered by~$1_\fp = 1_\fq$.) Only the first swing in \eqref{Eq:sequence}
may be not proper (and only if $\fp = \fr$). 


In \cite[Section 4]{gG14a}, the Swing Lemma is used to proved that 
in an SPS lattice~$L$, the order $\jcon$ has the property that
every element is covered by at most two elements. 
G. Cz\'edli~\cite{gC13a} proves that the converse does not hold.
The order $P$ of Figure~\ref{Fi:DandP} has this property,
but it cannot be represented as $\tup{J}\,(\Con L)$ for any SPS lattice $L$.
See my paper \cite{gG14d} for a different proof.
\begin{figure}[hbt]
\centerline{\includegraphics{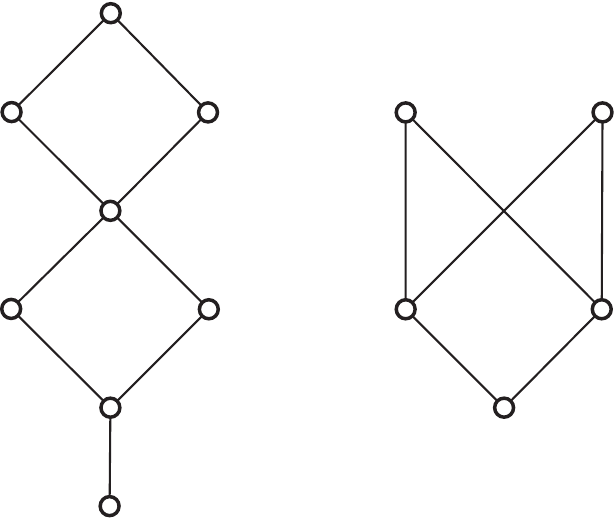}}
\caption{The lattice $D_8$ and the order $P = \tup{J}\,(D_8)$}%
\label{Fi:DandP}
\end{figure}

It would be interesting to see whether the Swing Lemma will be useful
in resolving the problem of characterizing congruence lattices 
of SPS lattices.

\end{document}